\theoremstyle{plain}
\newtheorem{theorem}{Theorem}
\newtheorem{corollary}[theorem]{Corollary}
\newtheorem{observation}[theorem]{Observation}
\theoremstyle{definition}
\newtheorem{definition}[theorem]{Definition}
\def\li{\text{\rm Li}}
\def\stirling#1#2{\genfrac{\{}{\}}{0pt}{}{#1}{#2}}
\def\euler#1#2{\genfrac{\langle}{\rangle}{0pt}{}{#1}{#2}}
\newcommand{\seqnum}[1]{\href{http://oeis.org/#1}{\underline{#1}}}
\date{\today }
\title{Poly-Bernoulli Numbers and Eulerian Numbers}
\author{Be\'ata B\'enyi\\
Faculty of Water Sciences\\
National University of Public Service\\
Hungary\\
\tt beata.benyi@gmail.com\\
\and
P\'eter Hajnal\\
 Bolyai Institute\\ University of Szeged\\ Hungary\\ and \\
 Alfr\'ed R\'enyi Institute of Mathematics\\
            Hungarian Academy of Sciences\\ Hungary\\
\tt hajnal@math.u-szeged.hu}
\begin{document}

\maketitle
\begin{abstract} 
In this note we prove combinatorially some new formulas 
connecting poly-Bernoulli numbers with negative indices
to Eulerian numbers.
\end{abstract}

\section{Introduction}

Kaneko \cite{Kaneko} introduced Poly-Bernoulli numbers 
\seqnum{A099594} during his investigations 
on multiple zeta values. He defined these numbers
by their generating function:
\begin{align}
\sum_{n=0}^{\infty} B_{n}^{(k)}\frac{x^n}{n!}=
\frac{\li _k(1-e^{-x})}{1-e^{-x}},
\end{align}
where \[\li_k(z)=\sum_{i=1}^{\infty}\frac{z^i}{i^k}\]
is the classical polylogarithmic function.
As the name indicates, poly-Bernoulli numbers 
are generalizations of the Bernoulli numbers. 
For $k=1$ $B_n^{(1)}$ are  the classical 
Bernoulli numbers with $B_{1}=\frac{1}{2}$. 
For negative $k$-indices poly-Bernoulli numbers are integers 
(see the values for small $n$, $k$ in Table 1) 
and have interesting combinatorial properties. 

\begin{table}[h]
\begin{center}
\begin{tabular}{|c||c|c|c|c|c|c|}
\hline
$n$\textbackslash $k$
  & 0 & 1 & 2 & 3 & 4 & 5\\
\hline\hline
0 & 1 & 1& 1& 1 & 1 & 1\\
\hline
1 & 1 & 2 & 4 & 8 & 16 & 32\\
\hline
2 & 1 & 4 & 14 & 46 & 146 & 454 \\
\hline
3 & 1 & 8 & 46 & 230 & 1066 & 4718 \\
\hline
4 & 1 & 16 & 146 & 1066 & 6906 & 41506\\
\hline
5 & 1 & 32 & 454 & 4718 & 41506 & 329462\\
\hline
\end{tabular}
\end{center}
\caption{The poly-Bernoulli numbers $B_n^{(-k)}$}\label{tabel}
\end{table}

Poly-Bernoulli numbers enumerate several combinatorial objects arisen
in different research areas, as for instance lonesum matrices,
$\Gamma$-free matrices, acyclic orientations of complete bipartite
graphs, alternative tableaux with rectangular shape, permutations with
restriction on the distance between positions and values, permutations
with excedance set $[k]$ etc. 
In \cite{BH1,BH2,BB}
the authors summarize in an actual list
the known interpretations, present connecting bijections and give
further references.

In this note we are concerned only with poly-Bernoulli numbers with
negative indices. For convenience, we denote by $B_{n,k}$ the
poly-Bernoulli numbers $B_n^{(-k)}$.

Kaneko derived two formulas for the poly-Bernoulli numbers with
negative indices: a formula that we call basic formula, and an
inclusion-exclusion type formula. The basic formula is
\begin{align}\label{combform}
B_{n,k}=\sum_{m=0}^{\min(n,k)}(m!)^2{n+1\brace m+1}{k+1\brace m+1},
\end{align}
where ${n\brace k}$ denotes the Stirling numbers 
of the second kind \seqnum{A008277} that counts 
the number of partitions of an $n$-element 
set into $k$ non-empty blocks \cite{GKP}. 
The inclusion-exclusion type formula is
\begin{align}
B_{n,k}=\sum_{n=0}^{\infty}(-1)^{n+m}m!{n\brace m}(m+1)^k.
\end{align}

Kaneko's proofs were algebraic, based on manipulations
of generating functions.
The first combinatorial investigation of
poly-Bernoulli numbers was done by 
Brewbaker \cite{Brewbaker}. 
He defined $B_{n,k}$ as the number of lonesum 
matrices of size $n\times k$.
He proved both formulas combinatorially; hence,
he proved the equivalence of the algebraic definition
and the combinatorial one.

Bayad
and Hamahata \cite{Bayad}
introduced poly-Bernoulli polynomials by the following
generating function:
\begin{align*}
\sum_{n=0}^{\infty}B_n^{(k)}(x)\frac{t^n}{n!}=
\frac{\li_k(1-e^{-t})}{1-e^{-t}}e^{xt}.
\end{align*}
For negative indices the polylogarithmic 
function converges for $|z|<1$ and equals to
\begin{align}\label{lieu}
\li_{-k}(z)=\frac{\sum_{j=0}^{k} \euler{k}{j}z^{k-j}}{(1-z)^{k+1}},
\end{align}
where $\euler{k}{j}$ is the Eulerian 
number \cite{GKP} \seqnum{A008282} given for instance by:
\begin{align}\label{eulerszita}
 \euler{k}{j} =\sum_{i=0}^j(-1)^i\binom{k+1}{i}(j-i)^k.
\end{align}
In \cite{Bayad} the authors used analytical methods 
to show that for $k\leq 0$ it holds 
\begin{align}\label{pBpol}
B_{n}^{(k)}(x)=\sum_{j=0}^{|k|}\euler{|k|}{j}
\sum_{m=0}^{|k|-j}\binom{|k|-j}{m}(-1)^m(x+m-|k|-1)^n.
\end{align} 
The evaluation of \eqref{pBpol} at $x=0$ leads 
to a new explicit formula of the poly-Bernoulli numbers 
involving Eulerian numbers.
\begin{theorem}\cite{Bayad}\label{BayHam}
For all $k> 0$ and $n> 0$ it holds
\begin{align}\label{pBeu2}
B_{n,k}=\sum_{j=0}^{k}\left\langle k\atop j\right\rangle
\sum_{m=0}^{k-j}(-1)^m\binom{k-j}{m}(k+1-m)^n.
\end{align}
\end{theorem}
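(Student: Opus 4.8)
The plan is to obtain the formula by specializing the poly-Bernoulli polynomial identity \eqref{pBpol} at $x=0$, exactly as the text suggests. Since the theorem assumes $k>0$ while \eqref{pBpol} is stated for nonpositive superscript indices, the first step is to align the notation: in \eqref{pBpol} the index is $k\le 0$, and $B_n^{(k)}(0)=B_n^{(k)}=B_{n,|k|}$, so writing $|k|$ for the positive parameter (which I will again call $k$ to match the statement) the substitution $x=0$ turns the inner summand $(x+m-|k|-1)^n$ into $(m-k-1)^n$. To match the target, I would then reverse the summation index in the inner sum, replacing $m$ by $k-j-m$, so that $(m-k-1)^n$ becomes $(-(m+j+1))^n$; the point of this reindexing is to produce a base of the form $(k+1-m')$ after a second, outer reindexing, matching the $(k+1-m)^n$ appearing in \eqref{pBeu2}.

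First I would write out \eqref{pBpol} at $x=0$ explicitly as a double sum over $j$ and $m$, and then perform the reindexing $m\mapsto k-j-m$ in the inner sum, using the symmetry $\binom{k-j}{m}=\binom{k-j}{k-j-m}$ of the binomial coefficient to keep the combinatorial factor in the desired shape. After this step the sign $(-1)^m$ and the factor $(-1)^n$ coming from the $n$-th power of a negative number must be tracked carefully and combined. The second step is to absorb the leftover sign $(-1)^n$ and the base's sign: since $(-(m+j+1))^n=(-1)^n (m+j+1)^n$ and we want a base $(k+1-m)^n$, I would check that the outer Eulerian weight $\euler{k}{j}$ together with the Eulerian symmetry $\euler{k}{j}=\euler{k}{k-j}$ lets me relabel $j$ so that $m+j+1$ becomes $k+1-m$, thereby eliminating the stray $(-1)^n$.

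The cleanest route, and the one I expect to use, is therefore a purely formal manipulation: start from \eqref{pBpol}, set $x=0$, and apply the two symmetries (binomial and Eulerian) together with one or two index reversals to bring the expression into the form \eqref{pBeu2}. No new combinatorial input is needed; the result is an algebraic rearrangement of an already-established identity. I would present the chain of equalities in a single \texttt{align*} display, taking care that each reindexing is justified by the stated symmetry so that the signs reconcile.

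The main obstacle will be bookkeeping the signs. The factor $(-1)^m$ in \eqref{pBpol}, the $(-1)^n$ produced by raising a negative quantity to the $n$-th power, and the sign changes induced by the two index reversals all interact, and it is easy to be off by a global $(-1)^n$ or to mismatch the parity of the reindexing shift. I would verify the final sign by testing a small case, say $k=1$ or $k=2$ with a convenient $n$, against Table~\ref{tabel}, to confirm that \eqref{pBeu2} reproduces the tabulated value of $B_{n,k}$ before committing to the general derivation.
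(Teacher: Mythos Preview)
Your approach is genuinely different from the paper's, and in fact it is essentially the analytic route of Bayad--Hamahata that the paper sets out to \emph{replace}. The paper's contribution here is a purely combinatorial argument: it interprets $B_{n,k}$ as the number $|\mathcal{C}_n^k|$ of Callan permutations, encodes each Callan permutation as a pair $(\pi^r, w)$ where $\pi^r$ is a permutation of $[k]$ and $w$ is a ``valid word'' recording the positions of the left-value elements relative to $\pi^r$, shows that the number $w(j-1)$ of valid words depends only on the number $j-1$ of descents of $\pi^r$, computes $w(d)=\sum_{m}(-1)^m\binom{d}{m}(k+1-m)^n$ by inclusion--exclusion (this is the proof of Theorem~\ref{theo1}), and then applies the Eulerian symmetry $\euler{k}{j}=\euler{k}{k+1-j}$ to pass from $w(j-1)$ to $w(k-j)$ and obtain \eqref{pBeu2}. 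Your algebraic rearrangement of \eqref{pBpol} bypasses this combinatorics entirely, which is precisely the new content the paper is offering.

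There is also a concrete obstacle in your plan. Setting $x=0$ in \eqref{pBpol} turns the inner factor into $(m-k-1)^n=(-1)^n(k+1-m)^n$, so the expression you obtain is literally $(-1)^n$ times the right-hand side of \eqref{pBeu2}. No reindexing can remove that global factor: reindexing a finite sum rearranges its terms but does not change its value, and both expressions are asserted to equal the same number $B_{n,k}$. Your proposed small-case check would expose this immediately (for $n=k=1$ the specialization of \eqref{pBpol} at $x=0$ gives $-2$, while $B_{1,1}=2$); the discrepancy points to a sign slip in the quoted form of \eqref{pBpol}, not to something the Eulerian or binomial symmetries can repair. (Incidentally, in the paper's convention the Eulerian symmetry reads $\euler{k}{j}=\euler{k}{k+1-j}$, not $\euler{k}{j}=\euler{k}{k-j}$.)
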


We see that the Eulerian numbers and the defining generating function
of poly-Bernoulli numbers for negative $k$ are strongly related.

In this note we prove this formula purely combinatorially. Moreover,
we show four further new formulas for poly-Bernoulli numbers
involving Eulerian numbers.

\section{Main results}

In our proofs a special class of permutations plays the key role. We
call this permutation class Callan permutations because Callan
introduced this class in \cite{Callan} as a combinatorial interpretation
of the poly-Bernoulli numbers.
We use the well-known notation: $[N]:=\{1,2,\ldots,N\}$.

\begin{definition}
\emph{Callan permutation} of $[n+k]$ is 
a permutation such that each substring 
whose support belongs to $N=\{1,2,\ldots ,n\}$ or  
$K=\{n+1,n+2,\ldots n+k\}$ is increasing.
\end{definition}

Let $\mathcal{C}_n^k $ denote the set of Callan permutations of
$[n+k]$. We call the elements in $N$ the \emph{left-value elements}
and the elements in $K$ the \emph{right-value elements}.  For
instance, for $n=2, k=2$ the Callan permutations are (we write the
left-value elements in red, right-value elements in blue):
\[
{\color{red}{1}}{\color{red}{2}}{\color{blue}{3}}{\color{blue}{4}},
{\color{red}{1}}{\color{blue}{3}}{\color{red}{2}}{\color{blue}{4}},
{\color{red}{1}}{\color{blue}{4}}{\color{red}{2}}{\color{blue}{3}},
{\color{red}{1}}{\color{blue}{3}}{\color{blue}{4}}{\color{red}{2}},
{\color{red}{2}}{\color{blue}{3}}{\color{red}{1}}{\color{blue}{4}},
{\color{red}{2}}{\color{blue}{4}}{\color{red}{1}}{\color{blue}{3}},
{\color{red}{2}}{\color{blue}{3}}{\color{blue}{4}}{\color{red}{1}},
{\color{blue}{3}}{\color{red}{1}}{\color{red}{2}}{\color{blue}{4}},
{\color{blue}{3}}{\color{red}{1}}{\color{blue}{4}}{\color{red}{2}},
{\color{blue}{3}}{\color{red}{2}}{\color{blue}{4}}{\color{red}{1}},
{\color{blue}{3}}{\color{blue}{4}}{\color{red}{1}}{\color{red}{2}},
{\color{blue}{4}}{\color{red}{1}}{\color{red}{2}}{\color{blue}{3}},
{\color{blue}{4}}{\color{red}{1}}{\color{blue}{3}}{\color{red}{2}},
{\color{blue}{4}}{\color{red}{2}}{\color{blue}{3}}{\color{red}{1}}.
\]
It is easy to see that Callan permutations are enumerated by the
poly-Bernoulli numbers, but for the sake of completeness, we recall
the sketch of the proof of this theorem.

\begin{theorem}\cite{Callan}
\[
|\mathcal{C}_n^k|=\sum_{m=0}^{\min(n,k)}(m!)^2
{n+1\brace m+1}{k+1\brace m+1}=B_{n,k}.
\]
\end{theorem}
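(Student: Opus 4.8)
The plan is to reduce a Callan permutation to its \emph{block decomposition} and then count the decompositions directly. Reading a Callan permutation from left to right, cut it into the maximal substrings that are monochromatic, i.e.\ each maximal run whose support lies entirely in $N$ or entirely in $K$. By definition each such block is increasing, and consecutive blocks have opposite colours, so the blocks alternate between \emph{left-blocks} (subsets of $N$) and \emph{right-blocks} (subsets of $K$). Conversely, since the order inside a block is forced to be increasing, a Callan permutation is the same data as an alternating sequence of nonempty blocks whose left-blocks form an ordered set partition of $N$ and whose right-blocks form an ordered set partition of $K$. First I would record this equivalence carefully, as everything afterwards is bookkeeping on such alternating sequences.

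Next I would exhibit a bijection that produces the summands $(m!)^2{n+1\brace m+1}{k+1\brace m+1}$. Adjoin a phantom element $0$ to $N$ and a phantom element $0'$ to $K$. For a fixed $m$, consider a partition of $N\cup\{0\}$ into $m+1$ nonempty blocks together with a linear order on the $m$ blocks avoiding $0$; there are exactly $m!\,{n+1\brace m+1}$ of these, and likewise $m!\,{k+1\brace m+1}$ such ``marked ordered partitions'' of $K\cup\{0'\}$. Deleting $0$ turns the distinguished block into a possibly empty set $L_0\subseteq N$ and leaves ordered nonempty left-blocks $L_1,\dots,L_m$; similarly we obtain $R_0\subseteq K$ and $R_1,\dots,R_m$. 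I then map this pair of structures to the word
\[
L_0\,R_1\,L_1\,R_2\,L_2\cdots R_m\,L_m\,R_0,
\]
where each block is written in increasing order and empty blocks are simply omitted. Multiplying the two counts and summing over $m$ yields the claimed formula, so it remains to check that this map is a bijection onto $\mathcal{C}_n^k$.

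The verification splits into two easy halves and one genuinely delicate point. That the word is a Callan permutation is routine: the interior blocks $R_1,L_1,\dots,R_m,L_m$ are all nonempty and strictly alternate in colour, and the only blocks allowed to vanish, $L_0$ and $R_0$, sit at the two ends, so no two blocks of the same colour are ever adjacent. Injectivity is also immediate, since the block decomposition of the output recovers all the $L_i$ and $R_i$. The main obstacle is surjectivity together with uniqueness of $m$: I must confirm that every Callan permutation is produced exactly once. Here I would use that an alternating block sequence satisfies $|p-q|\le 1$, where $p$ and $q$ count the left- and right-blocks, and that the four possible shapes---classified by whether the permutation begins with a left- or right-block and ends with a left- or right-block---correspond precisely to the four cases ``$L_0$ empty or not'' and ``$R_0$ empty or not''. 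Checking that these four boundary patterns absorb the four shapes consistently (so that the recovered $m$ agrees whether read off from the left or the right side) is the crux; once it is in place, each permutation has a unique preimage and the proof is complete. As an independent sanity check I would confirm the count against the Stirling recurrence $m!\,{n+1\brace m+1}=m!\,{n\brace m}+(m+1)!\,{n\brace m+1}$, which re-expresses each factor in terms of ordered set partitions and matches the direct block count.
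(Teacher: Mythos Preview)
Your proposal is correct and is essentially the same argument as the paper's (sketch of) proof: both adjoin a phantom left element and a phantom right element so that the augmented block sequence always has exactly $m+1$ left blocks and $m+1$ right blocks, reducing the count to $(m!)^2{n+1\brace m+1}{k+1\brace m+1}$. You are more explicit than the paper about verifying bijectivity via the four boundary cases, but the underlying decomposition and the use of the phantoms are identical.
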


\begin{proof}(Sketch)
We extend our universe with $\textcolor{red}{0}$, a special left-value element and  
with $\textcolor{blue}{n+k+1}$, a special right-value element. $\widehat{N}=N\cup\{\textcolor{red}{0}\}$  and 
$\widehat{K}=K\cup {(\textcolor{blue}{n+k+1})}$.
Let $\pi\in\mathcal C_n^{k}$.
Let $\widetilde\pi=\textcolor{red}{0}\pi(\textcolor{blue}{n+k+1})$.
Divide $\widetilde\pi$ into maximal blocks of consecutive elements in 
such a way that each block is a subset of $\widehat{N}$
(\emph{left blocks})
or a subset of $\widehat{K}$ (\emph{right blocks}).
The partition starts with a left block
(the block of $\textcolor{red}{0}$) and ends with a right block
(the block of $(\textcolor{blue}{n+k+1})$).
So the left and right blocks alternate, and their number is the same,
say $m+1$.

Describing a Callan permutation is 
equivalent to
specifying $m$, a partition $\Pi_{\widehat N}$ of $\widehat N$
into $m+1$ classes (one class is the class
of $\textcolor{red}{0}$, the other $m$ classes are called \emph{ordinary
classes}), a partition $\Pi_{\widehat K}$  of $\widehat K$
into $m+1$ classes
($m$ many of them not containing $(\textcolor{blue}{n+k+1})$,
these are the ordinary classes), and two orderings of the ordinary classes.
After specifying the components, we
need to merge the two ordered set of classes (starting with the
nonordinary class of $\widehat{N}$ and ending with the nonordinary class of $\widehat{K}$),
and list the elements of classes in increasing order.
The classes of our partitions will form  
the blocks of the Callan permutations.
We will refer to the blocks coming from ordinary classes
as \emph{ordinary blocks}.

This proves the claim.
\end{proof}

The main results of this note are the next five formulas for the
number of Callan permutations and hence, for the poly-Bernoulli
numbers. We present elementary combinatorial proofs of the theorems in
the next section. Theorem \ref{theo2} is equivalent to theorem \ref{BayHam}; we recall the theorem in the combinatorial setting.

\begin{theorem}\label{theo0}
For all $k> 0$ and $n> 0$ it holds
\begin{align}\label{pBeupar}
|\mathcal{C}_n^k|=
\sum_{m=0}^{\min{(n,k)}}\sum_{i=0}^n\sum_{j=0}^k 
\euler{n}{i}\euler{k}{j}\binom{n+1-i}{m+1-i}\binom{k+1-j}{m+1-j}=
B_{n,k}.
\end{align}
\end{theorem}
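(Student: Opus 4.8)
The plan is to recover a Callan permutation $\pi\in\mathcal C_n^k$ from three independent pieces of data and to arrange the count so that each piece produces one factor of \eqref{pBeupar}. Following the sketch above, after prepending the special left-value $0$ and appending the special right-value $n+k+1$ the sequence $\widetilde\pi$ splits into maximal monochromatic blocks that alternate, begin with the block of $0$ and end with the block of $n+k+1$; consequently the number of left blocks equals the number of right blocks, say $m+1$. I would read off the two monochromatic subsequences of $\pi$: the left-value subsequence $\sigma$ (a permutation of $N=[n]$) and the right-value subsequence $\tau$ (a permutation of $K$, regarded as a permutation of $[k]$ through the order-isomorphism $K\to[k]$). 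The whole permutation is then rebuilt from $\sigma$, $\tau$ and the way each is cut into blocks, because the alternating merge that starts with a left block and ends with a right block is forced, hence contributes exactly one way.

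First I would reinterpret the Eulerian factors. Under the normalization fixed by \eqref{eulerszita} one checks that $\euler{n}{i}$ counts the permutations of $[n]$ with exactly $i$ maximal increasing runs (equivalently $i-1$ descents); I would verify this directly from \eqref{eulerszita} on small cases and by the standard inclusion-exclusion. Thus grouping the choices of $\sigma$ by run number $i$ contributes the factor $\euler{n}{i}$, and grouping the choices of $\tau$ by run number $j$ contributes $\euler{k}{j}$.

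Next I would count the admissible cuttings. Since every left block of a Callan permutation is increasing, each descent of $\sigma$ is a forced block boundary while each ascent is an optional one. Prepending the minimum $0$ to $\sigma$ does not change its descent set, so $0\sigma$ has length $n+1$, exactly $i-1$ descents and $n+1-i$ ascents; to form the required $m+1$ left blocks I must make all $i-1$ forced cuts and then choose $m+1-i$ further cuts among the $n+1-i$ ascents, giving $\binom{n+1-i}{m+1-i}$ possibilities. Appending the maximum $n+k+1$ to $\tau$ is the mirror situation and yields $\binom{k+1-j}{m+1-j}$ ways to produce the $m+1$ right blocks. Multiplying these, summing over the common block count $m$ and over $i,j$, and using that the merge is unique, I obtain
\begin{align*}
|\mathcal C_n^k|=\sum_{m}\sum_{i}\sum_{j}\euler{n}{i}\euler{k}{j}\binom{n+1-i}{m+1-i}\binom{k+1-j}{m+1-j},
\end{align*}
while $|\mathcal C_n^k|=B_{n,k}$ by Callan's theorem; extending the ranges to $0\le i\le n$, $0\le j\le k$ and $0\le m\le\min(n,k)$ merely adds vanishing binomial terms.

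The step I expect to be the main obstacle is the bookkeeping at the two special elements. I must confirm that prepending the minimum and appending the maximum genuinely preserve the descent counts, so that the run numbers $i$ and $j$ really control the number of forced cuts, and that the block of $0$ and the block of $n+k+1$ are folded correctly into $\binom{n+1-i}{m+1-i}$ and $\binom{k+1-j}{m+1-j}$. In particular I need to check that the left block of $0$ is permitted to collapse to $\{0\}$ (an empty ordinary left block of $\pi$, occurring when $\pi$ begins with a right-value), which corresponds precisely to the freedom to cut at the very first ascent of $0\sigma$. Once this boundary accounting is pinned down, each side reduces to the run-refinement identity $m!\stirling{n+1}{m+1}=\sum_i\euler{n}{i}\binom{n+1-i}{m+1-i}$ for ordered set partitions, and no further computation is required.
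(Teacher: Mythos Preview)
Your proposal is correct and follows essentially the same approach as the paper: both arguments split a Callan permutation into its left and right subsequences, classify each by its number of ascending runs (the Eulerian factors), and then count the admissible block cuts via the binomials $\binom{n+1-i}{m+1-i}$ and $\binom{k+1-j}{m+1-j}$. Your boundary bookkeeping with the special elements $0$ and $n+k+1$, and your closing remark linking the identity to the run-refinement of $m!\stirling{n+1}{m+1}$, are exactly what the paper does (the latter in a remark immediately after the proof).
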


\begin{theorem}\label{theo3}
For all $k> 0$ and $n>0$ it holds
\begin{align}\label{pBeu3}
|\mathcal{C}_n^k|=\sum_{j=0}^{k}
\left\langle k\atop j\right\rangle
\sum_{m=0}^{k+2-j}\binom{k+2-j}{m}(m+j-1)!{n\brace m+j-1}=B_{n,k}.
\end{align}
\end{theorem}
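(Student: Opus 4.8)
The plan is to prove \eqref{pBeu3} by a direct bijective count of the Callan permutations in $\mathcal{C}_n^k$, reading the right-value elements as a permutation and the left-value elements as the contents of the gaps it creates; equality with $B_{n,k}$ then follows from Callan's theorem. Given $\pi\in\mathcal{C}_n^k$, I would first delete all left-value elements and read the remaining elements of $K$ in the order they occur in $\pi$, obtaining a permutation $\sigma=\sigma_1\cdots\sigma_k$ of $K\cong[k]$. Because every maximal run of right-value elements in $\pi$ is increasing, each $K$-block of $\pi$ lies inside a single maximal ascending run of $\sigma$; thus the $K$-blocks refine the ascending-run decomposition, and each descent of $\sigma$ necessarily falls between two distinct $K$-blocks. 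Grouping by the ascending-run statistic produces the Eulerian factor: by \eqref{eulerszita}, $\euler{k}{j}$ counts the permutations of $[k]$ with $j-1$ descents, i.e.\ with exactly $j$ maximal ascending runs, so I would let $j$ index this statistic.

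Next I would account for the left-value elements through the $k+1$ slots of $\sigma$ (before $\sigma_1$, after $\sigma_k$, and the $k-1$ internal gaps). At each of the $j-1$ descents a maximal block of left-value elements must be inserted, since two consecutive right-value elements sharing a $K$-block would have to be increasing; these $j-1$ slots are \emph{forced}. The remaining slots, namely the $k-j$ internal ascents together with the two end slots, are \emph{optional}, for a total of $k-j+2=k+2-j$, and selecting the $m$ of them that actually receive a block can be done in $\binom{k+2-j}{m}$ ways. The number of left-value blocks is then $(j-1)+m=m+j-1$, and they occur in a definite left-to-right order, so filling them with the elements of $N\cong[n]$ is exactly the choice of an ordered set partition of $[n]$ into $m+j-1$ nonempty parts, counted by $(m+j-1)!\,{n\brace m+j-1}$. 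Since each block is automatically listed increasingly, this data reconstructs $\pi$, and summing over $j$ and $m$ gives the right-hand side of \eqref{pBeu3}.

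The step I expect to demand the most care is verifying that the correspondence is a genuine bijection and that the slot bookkeeping is exact: I must check that a slot of $\sigma$ is forced to carry a left-value block \emph{if and only if} it is a descent, that an ascent or an end slot may but need not carry one, that distinct triples $(\sigma,\text{optional slots},\text{ordered partition})$ yield distinct Callan permutations, and that the inverse map, which reads off $\sigma$, its realized gaps, and the induced ordered partition of $N$, recovers each $\pi$ exactly once. Finally I would confirm that contributions outside the effective ranges vanish automatically under the usual conventions (${n\brace r}=0$ for $r\le 0$ when $n>0$, and $\euler{k}{0}=0$), so no spurious terms appear; the cases $n=k=1$ and $n=1,k=2$ serve as a sanity check that the ``$+2$'' coming from the two end slots sits in the right place.
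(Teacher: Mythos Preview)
Your proposal is correct and is essentially the same argument the paper gives: fix the right-value permutation $\pi^r=\sigma$ with $j$ ascending runs, note that the $j-1$ descent slots are forced to receive a left-value block while the remaining $k+2-j$ slots (the $k-j$ ascents and the two ends) are optional, choose $m$ of the optional ones, and fill the resulting $m+j-1$ slots with an ordered partition of $[n]$. Your write-up is in fact cleaner than the paper's, which contains a couple of typos (it writes $\pi^\ell$ where $\pi^r$ is meant and omits the end slot in its verbal description), but the underlying bijection and count are identical.
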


\begin{theorem}\label{theo1}
For all $k> 0$ and $n>0$ it holds
\begin{align}\label{pBeu1}
|\mathcal{C}_n^k|=\sum_{j=0}^{k}
\left\langle k\atop j\right\rangle
\sum_{m=0}^{j-1}(-1)^m\binom{j-1}{m}(k+1-m)^n=B_{n,k}.
\end{align}
\end{theorem}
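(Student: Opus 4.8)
The plan is to set up a bijection between Callan permutations in $\mathcal{C}_n^k$ and pairs $(\sigma,f)$, where $\sigma$ is the arrangement of the right-value elements and $f$ records, for each left-value element, how many right-value elements precede it; I will then count the admissible pairs, grouped by the number of descents of $\sigma$, and match the result to the right-hand side of \eqref{pBeu1}.

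First I would read off, from a Callan permutation $\pi\in\mathcal{C}_n^k$, the subword $\sigma$ of its right-value elements; since $K$ has $k$ elements, $\sigma$ is a permutation of $K$. Its $k$ entries create $k+1$ gaps (before the first entry, between consecutive entries, and after the last), which I label $0,1,\ldots,k$. I then define $f\colon[n]\to\{0,1,\ldots,k\}$ by letting $f(x)$ be the number of right-value elements occurring before the left-value element $x$ in $\pi$. Conversely, from a pair $(\sigma,f)$ one rebuilds a word by inserting the left-value elements with $f$-value $t$ into gap $t$, listed in increasing order. The key verification is that this word is a Callan permutation \emph{exactly} when every \emph{descent gap} of $\sigma$, namely a gap $t\in\{1,\ldots,k-1\}$ with $\sigma(t)>\sigma(t+1)$, receives at least one left-value element: two consecutive right-value elements with no left-value element between them lie in a single right block and must therefore be increasing, so a descent gap must be occupied; conversely, filling all descent gaps forces every maximal run of right-value elements to be increasing, while the left blocks are automatically increasing because each gap is listed in increasing order. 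This makes the two maps mutually inverse.

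Next I would count the admissible pairs. Fix $\sigma$ and suppose it has a set $D\subseteq\{1,\ldots,k-1\}$ of descent gaps with $|D|=d$. The admissible $f$ are precisely the maps $[n]\to\{0,\ldots,k\}$ whose image contains $D$, and inclusion–exclusion over the omitted elements of $D$ counts them as $\sum_{m=0}^{d}(-1)^m\binom{d}{m}(k+1-m)^n$. The crucial bookkeeping step is to pin down the paper's convention for the Eulerian numbers: from \eqref{eulerszita} one checks (for instance $\euler{2}{0}=0$ and $\euler{2}{1}=\euler{2}{2}=1$) that $\euler{k}{j}$ is the number of permutations of $[k]$ with exactly $j-1$ descents. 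Grouping the permutations $\sigma$ by $d=j-1$ therefore turns the total count into $\sum_{j=0}^{k}\euler{k}{j}\sum_{m=0}^{j-1}(-1)^m\binom{j-1}{m}(k+1-m)^n$, which is the right-hand side of \eqref{pBeu1}; since the left-hand side counts $\mathcal{C}_n^k$ and $|\mathcal{C}_n^k|=B_{n,k}$, the theorem follows.

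I expect the main obstacle to be the careful treatment of two easy-to-miss points: the one-step shift in the Eulerian-number convention, which is exactly what makes the inner binomial $\binom{j-1}{m}$ rather than $\binom{j}{m}$ and the codomain size $k+1$; and the precise equivalence between ``the right blocks of a Callan permutation are increasing'' and ``every descent gap of $\sigma$ is nonempty.'' Once these are settled, the inclusion–exclusion count of $f$ and the grouping by descent number are routine.
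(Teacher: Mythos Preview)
Your proposal is correct and follows essentially the same route as the paper: your pair $(\sigma,f)$ is exactly the paper's pair $(\pi^r,w)$, your ``descent gaps must be nonempty'' condition is the paper's Observation that a word is valid iff it contains every descent value of $\pi^r$, and both arguments finish with the identical inclusion--exclusion count of maps $[n]\to\{0,\ldots,k\}$ whose image covers a prescribed $(j-1)$-set. The only cosmetic difference is that you spell out the inverse reconstruction and the Eulerian-number shift more explicitly than the paper does.
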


\begin{theorem}\label{theo4}
For all $k> 0$ and $n>0$ it holds
\begin{align}\label{pBeu4}
|\mathcal{C}_n^k|=\sum_{j=0}^{k}
\left\langle k\atop j\right\rangle
\sum_{m=0}^{j+1}\binom{j+1}{m}(m+k-j)!{n\brace m+k-j}=B_{n,k}.
\end{align}
\end{theorem}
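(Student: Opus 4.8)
The plan is to prove Theorem~\ref{theo4} by a direct combinatorial count of Callan permutations, organized so that the Eulerian number $\euler{k}{j}$ emerges from a decomposition of the right-value elements, while the remaining sum counts the ways to weave in the left-value elements. First I would set up the $m$-block bijective framework from the sketch above: a Callan permutation is encoded by a number $m$ of ordinary blocks on each side, a partition $\Pi_{\widehat N}$ of $\widehat N$ and $\Pi_{\widehat K}$ of $\widehat K$ each into $m+1$ classes (with distinguished classes for $\textcolor{red}{0}$ and $\textcolor{blue}{n+k+1}$), together with two orderings of the $m$ ordinary classes. The strategy is to eliminate one of the two orderings in favor of an Eulerian statistic, so that the outer sum $\sum_j \euler{k}{j}$ captures the arrangement of the $K$-side.

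The key idea I would pursue is to read the quantity $(m+k-j)!\,{n\brace m+k-j}$ as counting surjection-like data: ${n\brace m+k-j}$ partitions $N$ into $m+k-j$ nonempty blocks, and the factorial orders them. So I would look for a bijection in which, after fixing the Eulerian descent structure $j$ on the right side via \eqref{lieu}/\eqref{eulerszita}, the left-value elements together with some $K$-derived ``separators'' get distributed into $m+k-j$ ordered cells, and the binomial $\binom{j+1}{m}$ selects which of the $j+1$ relevant positions receive an extra merge. Concretely, I expect the right-hand side to arise by first splitting the sum over Callan permutations according to the number of descents (or ascents) among the right-value elements when read in permutation order, which is exactly what the Eulerian number $\euler{k}{j}$ enumerates; the inner sum should then count, for each such descent pattern, the number of compatible interleavings of the left-value elements.

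The cleanest route is probably to derive Theorem~\ref{theo4} from one of the already-harder formulas rather than from scratch: since Theorem~\ref{theo1} and Theorem~\ref{theo3} will be proved in the same section, and all four equal $B_{n,k}$, I would try to exhibit Theorem~\ref{theo4} as the image of Theorem~\ref{theo3} under the $k\mapsto k$, $j\mapsto k-j$ symmetry of the Eulerian numbers, $\euler{k}{j}=\euler{k}{k-j}$. Substituting $j\to k-j$ into \eqref{pBeu3} turns its inner sum index $m+j-1$ into $m+(k-j)-1$ and its binomial $\binom{k+2-j}{m}$ into $\binom{j+2}{m}$, which is structurally close to \eqref{pBeu4} but with a shift of the Stirling argument by one and a different binomial top. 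Bridging that gap would require a short reindexing together with the Stirling recurrence ${n\brace r}={n-1\brace r-1}+r{n-1\brace r}$ or the Vandermonde-type identity $\binom{j+2}{m}=\binom{j+1}{m}+\binom{j+1}{m-1}$, absorbing the discrepancy between $(m+k-j)!\,{n\brace m+k-j}$ terms.

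The hard part will be controlling the off-by-one shifts and making the combinatorial interpretation of the ``extra'' block rigorous. In the block model, the value $m+k-j$ (versus $m+j-1$ in Theorem~\ref{theo3}) signals that here the left-value elements are being grouped according to the \emph{ascending} runs of the right side, so the subtle point is to verify that the distinguished classes of $\textcolor{red}{0}$ and $\textcolor{blue}{n+k+1}$ and the empty-block conventions are handled consistently when passing between the two indexings. I expect the main obstacle to be precisely this boundary bookkeeping: ensuring that the binomial $\binom{j+1}{m}$ correctly counts the choice of which ascending runs of $K$ are merged with adjacent left blocks, and that the factorial times Stirling factor counts exactly the ordered set partitions of $N$ refined by these $m+k-j$ separators, with no over- or undercounting at the two ends of the permutation.
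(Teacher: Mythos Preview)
Your instinct is exactly right and matches the paper's proof: Theorem~\ref{theo4} is obtained from Theorem~\ref{theo3} via the symmetry of the Eulerian numbers, with no further combinatorial work. However, you have the symmetry wrong, and this is the sole source of the ``off-by-one'' difficulties you anticipate. In the paper's convention $\euler{k}{j}$ counts permutations of $[k]$ with $j$ ascending runs (equivalently $j-1$ descents), so reversing a permutation sends $j-1$ descents to $k-j$ descents, and the correct symmetry is
\[
\euler{k}{j}=\euler{k}{k+1-j},
\]
not $\euler{k}{j}=\euler{k}{k-j}$.

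With the correct substitution $j\mapsto k+1-j$ in \eqref{pBeu3}, the inner index $m+j-1$ becomes $m+k-j$ and the binomial $\binom{k+2-j}{m}$ becomes $\binom{j+1}{m}$, which is \eqref{pBeu4} on the nose. No Stirling recurrence, no Vandermonde, no boundary bookkeeping is needed; the entire paragraph about ``bridging the gap'' and ``controlling off-by-one shifts'' is addressing a problem that does not exist once the symmetry is stated correctly. The paper phrases this by writing $|\mathcal C_n^k|=\sum_j\euler{k}{j}w(k-j)$ and then reading off $w(k-j)$ from the formula for $w(j-1)$ established in the proof of Theorem~\ref{theo3}.
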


\begin{theorem}\label{theo2}\cite{Bayad}
For all $k> 0$ and $n> 0$ it holds
\begin{align}\label{pBeu2}
|\mathcal{C}_n^k|=\sum_{j=0}^{k}\left\langle k\atop j\right\rangle
\sum_{m=0}^{k-j}(-1)^m\binom{k-j}{m}(k+1-m)^n=B_{n,k}.
\end{align}
\end{theorem}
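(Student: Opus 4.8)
The plan is to set up a single bijection that directly yields the summand structure, and then to package the count using the standard reflection symmetry of the Eulerian numbers. First I would describe a Callan permutation of $[n+k]$ by recording two pieces of data: the subsequence $\sigma$ formed by the right-value elements $K$, which is an arbitrary permutation of $K$, and the way the left-value elements $N$ are distributed among the $k+1$ gaps determined by $\sigma$ (the two external gaps together with the $k-1$ internal gaps). Since each maximal substring supported in $N$ must be increasing, the elements placed in a common gap are forced to appear in increasing order; hence the distribution of $N$ is nothing more than a function $f\colon [n]\to\{0,1,\dots,k\}$ assigning each left-value element to a gap, and conversely every pair $(\sigma,f)$ reconstructs a unique Callan permutation. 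The only constraint linking the two pieces is the maximality clause: if $\sigma$ has a descent at an internal gap, the two neighbouring right-value elements cannot lie in one increasing block, so that gap must receive at least one left-value element; at an ascent gap or an external gap there is no constraint.

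Next I would organize the count by the descent statistic of $\sigma$. A permutation of the $k$-element set $K$ with exactly $k-j$ descents has $k+1-j$ maximal increasing runs, so in the paper's run-convention it is counted by $\euler{k}{k+1-j}$, which by the symmetry $\euler{k}{k+1-j}=\euler{k}{j}$ equals $\euler{k}{j}$. For such a fixed $\sigma$ the admissible functions $f$ are precisely those whose image meets each of the $k-j$ designated descent gaps; counting these by inclusion--exclusion over the subset of descent gaps that are missed gives $\sum_{m=0}^{k-j}(-1)^m\binom{k-j}{m}(k+1-m)^n$. Summing over all $\sigma$, grouped by their number of descents, then produces the right-hand side of \eqref{pBeu2}, and the total equals $|\mathcal{C}_n^k|=B_{n,k}$ by Callan's theorem.

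I expect the delicate points to be bookkeeping rather than anything deep. The main care is in fixing the Eulerian convention consistently: the paper indexes $\euler{k}{j}$ by increasing runs, so I must translate ``number of descents $=k-j$'' into ``$\euler{k}{k+1-j}$ permutations'' and then invoke the reflection symmetry $\euler{k}{j}=\euler{k}{k+1-j}$ to recover the stated coefficient. The second point requiring attention is the exactness of the bijection: I must verify that forcing precisely the descent gaps to be nonempty is equivalent to the maximality requirement in the definition of Callan permutations (ascent and external gaps remaining free), so that distinct pairs $(\sigma,f)$ correspond to distinct permutations and all of $\mathcal{C}_n^k$ is reached exactly once. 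Finally, I would note that this argument is the mirror image of the one for Theorem \ref{theo1}: replacing $j$ by $k+1-j$ throughout the proof of \eqref{pBeu1} and applying the same symmetry gives \eqref{pBeu2} immediately, so Theorem \ref{theo2} may alternatively be deduced from Theorem \ref{theo1} in a single line.
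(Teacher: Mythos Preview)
Your proposal is correct and follows essentially the same route as the paper: your function $f\colon[n]\to\{0,1,\dots,k\}$ is exactly the paper's word $w$ (with $w_i$ the number of right-value elements preceding left-value $i$), your condition that descent gaps must be hit is the paper's Observation that a valid word must contain every descent value $d_i$, and your inclusion--exclusion count is verbatim the paper's computation of $w(k-j)$. The paper proves Theorem~\ref{theo1} first and then obtains Theorem~\ref{theo2} from it via the Eulerian symmetry $\euler{k}{j}=\euler{k}{k+1-j}$, which is precisely the one-line alternative you note at the end.
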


\section{Proofs of the main results}

Eulerian numbers play the crucial role in these formulas. Though
Eulerian numbers are well-known, we think it could be helpful for
readers who are not so familiar with this topic to recall some basic
combinatorial properties.

\subsection{Eulerian numbers}

First we need some definitions and notation. 
Let $\pi=\pi_1\pi_2\ldots \pi_n$ 
be a permutation of $[n]$. We call $i\in [n-1]$ 
a \emph{descent} 
(resp.~\emph{ascent}) of $\pi$ if $\pi_i>\pi_{i+1}$ 
(resp.~$\pi_i<\pi_{i+1}$). 
Let $D(\pi)$ (resp.~$A(\pi)$) denote the set of descents 
(resp.~ the set of ascents) of the permutation $\pi$. 
For instance, $\pi=361487925$ has 3 descents and 
$D(\pi)= \{2, 5, 7\}$, 
while it has $5$ ascents and $A(\pi)=\{1,3,4,6,8\}$.

Eulerian numbers $\euler{k}{j}$ counts the permutations of 
$[k]$ with $j-1$ descents. A permutation $\pi\in S_n$ with $j-1$ descents 
is the union of $j$ increasing subsequences of consecutive entries, 
so called \emph{ascending runs}. So, in other words 
$\left\langle k\atop j\right\rangle$ is the number of permutations of $[k]$ 
with $j$ ascending runs. In our example, $\pi$ is 
the union of 4 ascending runs: $36$, $148$, $79$, and $25$. 

There are several identities involving Eulerian numbers, 
see for instance \cite{Bona}, \cite{GKP}. 
We will use a strong connection 
between the surjections/ordered partitions and Eulerian numbers:

\begin{align}\label{opeu}
r!{k\brace r}=\sum_{j=0}^r\euler{k}{j}\binom{k-j}{r-j}.
\end{align}

\begin{proof}
We take all the partitions of $[k]$
into $r$ classes. Order the classes,
and list the elements in increasing order.
This way we obtain permutations of $[k]$.
Counting by multiplicity we get
$r!{k\brace r}$ permutations.
All of these have at most $r$ ascending runs.

Take a permutation with $j(\leq r)$ ascending runs. 
How many times did we list it in the previous paragraph?
We split the ascending runs 
by choosing $r-j$ places out of the $k-j$ ascents 
to obtain all the initial $r$ blocks.
The multiplicity is $\binom{k-j}{r-j}$.
This proves our claim.
\end{proof}

Inverting \eqref{opeu} gives immediately,
\begin{align*}
\euler{k}{j}= \sum_{r=1}^j(-1)^{j-r}r!{k\brace r} \binom{k-r}{j-r}.
\end{align*}
In the previous section we mentioned the close relation between Eulerian numbers and the polylogarithmic function $\li_k(x)$. 
Here we recall one possible derivation of the identity \eqref{lieu} 
following \cite{Bona}.
 
\begin{align*}
\sum_{j=0}^k \euler{k}{j} x^j&=\sum_{j=0}^\infty \euler{k}{j} x^j=
\sum_{j=0}^\infty
\sum_{i=0}^{j}(-1)^i\binom{k+1}{i}(j-i)^kx^j=\\
&=\sum_{j=1}^\infty \sum_{i=0}^j(-1)^{k-i}\binom{k+1}{j-i}i^kx^j
=\sum_{i=0}^\infty \sum_{j=i}^\infty(-1)^{k-i}\binom{k+1}{j-i}i^kx^j=\\
&=\sum_{i=0}^\infty i^kx^i
\left(\sum_{j=i}^\infty\binom{k+1}{j-i}(-x)^{j-i}\right)=
\sum_{i=0}^\infty i^kx^i
\left(1-x\right)^{k+1}=
\\
&=(1-x)^{k+1}\sum_{i=0}^\infty i^kx^i=(1-x)^{k+1}\li_{-k}(x).
\end{align*}
Plugging \eqref{eulerszita} for $\euler{k}{j}$, exchanging $i$ to
$j-i$, changing the order of the summation; and 
finally, applying the binomial theorem we get the
result.

\subsection{Combinatorial proofs of the theorems}

Now we turn our attention to the proofs of our theorems. For the sake
of convenience, thanks to our color coding
(left-value elements are red, and right-value elements
are blue), we rewrite the set of right-value elements as
$K=\{{\textcolor{blue}{1}}, {\color{blue}{2}}, 
\ldots,{\color{blue}{k}}\}$, and
$\widehat{K}=K\cup\{\textcolor{blue}{k+1}\}$. 
We can do this without
changing essentially Callan permutations, since we just need the
distinction between the elements $N$ and $K$ and an order in $N$ and
$K$. If we consider separately the left-value elements and right-value
elements in the permutation $\pi$ the elements of $N$ form a
permutation of $[n]$ and the elements of $K$ form a permutation of
$[k]$. We let $\pi^r$ denote the permutation restricted to the
right-value elements and we let $\pi^\ell$ denote the permutation
restricted to the left-value elements. For instance, for
\[
\pi=\color{red}{023}\color{blue}{145}\color{red}{47}
    \color{blue}{28}\color{red}{18}\color{blue}{3}
    \color{red}{569}\color{blue}{679},\]
$\pi^r={\textcolor{blue}{145283679}}$, while 
$\pi^\ell={\textcolor{red}{0234718569}}$.

\begin{proof}[Proof of Theorem 4.]
We consider the last entries of the blocks in the restricted 
permutations $\pi^\ell$ resp.~$\pi^r$. Some of the blocks end 
with a descent and some of the blocks not. 
(The special elements $\textcolor{red}{0}$ 
and $\textcolor{blue}{k+1}$ are 
neither descents nor ascents of the permutations.) 
Let $i$ be the number of ascending runs and $j$ 
the number of ascending runs in $\pi^r$. Let further $m$ be 
the number of ordinary blocks of both types. 
The $i-1$ descents of $\pi^\ell$ determine $i$
blockendings; hence, we are missing
$m-(i-1)$ further blockendings with ascents. 
Similarly, the $j-1$ descents of $\pi^r$ determine $j-1$ 
blockendings and there are 
further $m-(j-1)$ blockendings with an ascent. 

Given a pair $(\pi^\ell,\pi^r)$ 
with $|D(\pi^\ell)|=i-1$ and $|D(\pi^r)|=j-1$, 
we can construct a Callan permutations, according to the above arguments.
In our running example, 
$\pi^\ell={\textcolor{red}{0234718569}}$
and we need to choose $3-2=1$
blockendings from $9-2$ possibilities.
In general, we need to choose $m-(i-1)$
blockendings from $n-(i-1)$ possibilities. And analogously for $\pi^r$, we need to choose  $m-(j-1)$
blockendings from $k-(j-1)$ possibilities.
Hence, for a given a pair $(\pi^\ell,\pi^r)$ 
with $|D(\pi^\ell)|=i-1$ and $|D(\pi^r)|=j-1$
we have
\begin{align*}
\binom{n+1-i}{m+1-i}\binom{k+1-j}{m+1-j}
\end{align*} 
different corresponding Callan permutations.
Since the number of pairs $(\pi^\ell,\pi^r)$ with $|D(\pi^\ell)|=i-1$ 
and $|D(\pi^r)|=j-1$ 
is $\euler{n}{i}\euler{k}{j}$
Theorem 4 is proven. 
\end{proof}

Note that \eqref{pBeupar} is actually a rewriting of the basic
combinatorial formula \eqref{combform} in terms of Eulerian numbers
using the relation \eqref{opeu} between the number of ordered
partitions and Eulerian numbers.

Now we enumerate Callan permutations according to the number of
descents in $\pi^r$. Given a permutation $\pi^r$ with $j-1$ descents
we determine the number of ways to merge $\pi^r$ with left-value
elements to obtain a valid Callan permutation. Let
$D=\{d_1,d_2,\ldots, d_{j-1}\}$ be the set of descents of $\pi^r$. In
our running example, $j=3$ and $D=\{3,5\}$.  We code the positions of
the left-values comparing to the right-values by a word $w$. We let
$w_i$ be the number of right-values that are to the left of the
left-value $i$. In our example, $w_1=5$, since there are 5 right-value
elements preceding the left-value $\textcolor{red}{1}$, $w_2=0$,
because there are no right-value elements preceding the left-value
$\textcolor{red}{2}$, etc. Hence, $w=500366356$. Note that the blocks
of the left-value elements can be recognized from the word: The
positions $i$, for which the values $w_i$ are the same contain
the elements of a block. We call a word
\emph{valid respect to $\pi^r$} if the augmentation of $\pi^r$ according
to the word $w$ leads to a valid Callan permutation.

\begin{observation}
A word $w$ is valid respect to a permutation $\pi^r$ if and only if
it contains every value $d_i$ of the descent set of $\pi^r$.
\end{observation}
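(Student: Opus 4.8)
The plan is to read a word $w=w_1w_2\cdots w_n$ over the alphabet $\{0,1,\ldots,k\}$ as a recipe for inserting the left-value elements $1,2,\ldots,n$ into the fixed right-value permutation $\pi^r$, and then to translate the Callan condition on the reconstructed arrangement into a condition on the values occurring in $w$. Concretely, $\pi^r$ splits the line into $k+1$ \emph{gaps}, one initial gap together with one after each of its $k$ entries, and $w_i$ prescribes that left-value $i$ be dropped into the gap lying immediately after the first $w_i$ right-values; the left-values landing in a common gap are listed in increasing order. This reconstruction is deterministic, so each word $w$ produces a unique interleaving $\pi$, and I would verify the equivalence by examining when this $\pi$ is a Callan permutation.

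First I would record that the left blocks of $\pi$ are automatically increasing and maximal: the left-values sharing a common value $w_i=g$ are exactly those placed in gap $g$, no right-value separates them, and by construction they are written in increasing order, so they form one increasing maximal run bounded by right-values (or by an end). Hence the \emph{only} possible obstruction to $\pi$ being a Callan permutation lies in the right blocks, and validity reduces to the assertion that every maximal run of consecutive right-values is increasing.

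Next I would make the bridge between gaps and word-values explicit: two entries $\pi^r_p$ and $\pi^r_{p+1}$ that are consecutive in $\pi^r$ remain adjacent in $\pi$, hence sit in the same right block, precisely when no left-value is dropped into gap $p$, i.e.\ precisely when the value $p$ does not occur in $w$. With this dictionary both implications are short. For the forward direction I would argue by contraposition: if a descent value $d\in D(\pi^r)$ is missing from $w$, then $\pi^r_d$ and $\pi^r_{d+1}$ are adjacent right-values with $\pi^r_d>\pi^r_{d+1}$, so that right block is not increasing and $\pi$ is not Callan. For the converse, if $w$ contains every descent value, then inside any maximal right run on positions $p,p+1,\ldots,q$ none of the internal values $p,\ldots,q-1$ occurs in $w$, so none of them is a descent of $\pi^r$; thus $\pi^r_p<\pi^r_{p+1}<\cdots<\pi^r_q$, every right block is increasing, and $\pi$ is a valid Callan permutation.

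I expect the only delicate point to be the bookkeeping around the reconstruction map, specifically checking that a word assigns each left-value to an unambiguous position and that the left blocks it produces are genuinely \emph{maximal} runs, so that the left-block condition is truly vacuous; once the gap-versus-value dictionary is stated cleanly, the equivalence itself is immediate. The special elements $\textcolor{red}{0}$ and $\textcolor{blue}{k+1}$ cause no difficulty, since $\textcolor{blue}{k+1}$ sits last and never creates a descent, while $\textcolor{red}{0}$ sits first and is excluded from the indexing of $w$.
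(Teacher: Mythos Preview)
Your proposal is correct and follows essentially the same approach as the paper's own proof: both arguments hinge on the observation that a descent position $d$ of $\pi^r$ survives as an adjacent decreasing pair in the reconstructed permutation precisely when the value $d$ is absent from $w$. Your write-up is more explicit than the paper's---you spell out the reconstruction map, the gap/word-value dictionary, and the fact that left blocks are automatically increasing---but the underlying logic is identical.
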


\begin{proof}
In a Callan permutation the substrings restricted to $K$ or $N$ 
are increasing subsequences. Given $\pi^r$ with descent set $D$, 
each $d_i\in D$ has to be the last element of a block 
in the ordered partition of the set K, the set of
right-value elements.
Hence, each right-value with position $d_i$ in $\pi^r$ has 
to be followed by a left-value element in the Callan permutation. 
In our word $w$ at the position of this
left-value element there is a $d_i$. 

For the converse, 
assume that our word $w$ contains at least one $d_i$,
for any $d_i\in D(\pi^r)$.
There is at least one left-value element
with $d_i$ in $w$ at its position.
This implies that if we combine $\pi^r$ and $\pi^\ell$
then in $\pi^r$ the position of the descent
will be interrupted by a left-value element. 
The combined permutation will be a Callan permutation.
\end{proof}

\begin{corollary}
The number of valid words respect to $\pi^r$ depends only on 
the number of descents in $\pi^r$.
\end{corollary}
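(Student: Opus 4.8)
The plan is to read the Corollary straight off the Observation. By the Observation, a word $w$ is valid with respect to $\pi^r$ if and only if it contains every value $d_i$ in the descent set $D(\pi^r)$. Since each entry $w_i$ records the number of right-values lying to the left of the left-value $i$, and there are exactly $k$ right-values, the word $w$ is an arbitrary string of length $n$ over the alphabet $\{0,1,\ldots,k\}$, subject only to the requirement that each of the $j-1$ descent values occur at least once. Thus counting valid words is the same as counting length-$n$ strings over a $(k+1)$-letter alphabet that use every letter of a prescribed $(j-1)$-element subset $D$, and the whole task reduces to showing this count does not see $D$ itself, only its size.

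First I would establish exactly that by a relabeling argument. If $D$ and $D'$ are two subsets of $\{0,1,\ldots,k\}$ with $|D|=|D'|=j-1$, choose any permutation $\sigma$ of $\{0,1,\ldots,k\}$ with $\sigma(D)=D'$; applying $\sigma$ letter by letter sends a string covering $D$ to a string covering $D'$, and is visibly a bijection with inverse induced by $\sigma^{-1}$. Hence the number of valid words depends only on $|D|=j-1$, i.e. only on the number $j-1$ of descents of $\pi^r$, which is precisely the assertion of the Corollary.

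To make this dependence explicit (so it can be fed into Theorem \ref{theo1}), I would then evaluate the count by inclusion-exclusion on the required letters that fail to appear: discarding a fixed $m$-subset of $D$ leaves strings over the remaining $k+1-m$ letters, of which there are $(k+1-m)^n$, so the number of valid words with respect to a permutation $\pi^r$ with $j-1$ descents equals $\sum_{m=0}^{j-1}(-1)^m\binom{j-1}{m}(k+1-m)^n$. I expect no genuine obstacle here; the only points needing care are fixing the alphabet correctly as $\{0,1,\ldots,k\}$ of size $k+1$ (so that $w_i=k$ exactly when every right-value precedes the left-value $i$), and observing that descent values always lie in $\{1,\ldots,k-1\}$, whence $j-1\le k-1<k+1$ and a $(j-1)$-element required set always embeds in the alphabet, guaranteeing that the relabeling permutation $\sigma$ exists.
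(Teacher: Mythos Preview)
Your proof is correct and aligns with the paper's intent: the paper states the Corollary with no separate proof, treating it as immediate from the Observation, and your relabeling bijection simply makes explicit the symmetry the paper takes for granted. The inclusion--exclusion computation you append is exactly the argument the paper gives later in its proof of Theorem~\ref{theo1}, so you have anticipated rather than diverged from it.
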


We let $w^{j-1}$ denote a word that is valid to a $\pi^r$ with $j-1$
descents and we let $W(\pi^r)$ denote the set of words $w^{j-1}$.  The
number of Callan permutations of size $n+k$ is the number of
pairs $(\pi^r,w^{j-1})$, where $\pi^r$ is a permutation of $[k]$ with
$j-1$ descents and $w^{j-1}\in W(\pi^r)$.
We denote $|W(\pi^r)|$ by $w(j-1)$.
Hence,
\[
|\mathcal{C}_n^k|=\sum_{j=1}^k\euler{k}{j}w(j-1).
\]

The next two proofs are based on 
two different ways of determining $w(j-1)$,
i.e.~enumerating those $w^{j-1}$'s that are
valid to a $\pi^r$ with $j-1$
descents.

\begin{proof}[Proof of Theorem 5.]
Fix $\pi^r$ and take a $w^{j-1}\in W(\pi^r)$.
$w^{j-1}$ corresponds to an ordered
partition of $[n]$ into at least $j-1$ blocks. 
Let $j-1+m$ be the number
of the blocks. 

First, we take an ordered partition of 
$\color{red}{\{1,2\ldots,n\}}$ into $m+j-1$
non-empty blocks in $(m+j-1)!\stirling{n}{m+j-1}$ ways. 
Then we refine the partition
of $\pi^\ell$, defined by the descents.
For the refinement we need to choose
additional places for the $m$ blocks.
These places can be before the first element
of $\pi^\ell$,
or at an ascent.
We have $\binom{k+2-j}{m}$ choices.
This proves
\eqref{pBeu3}.
\end{proof}

\begin{proof}[Proof of Theorem 6.]
Now we calculate $w(j-1)$ using the
inclusion-exclusion principle. The total number of words of length
$n$ with entries $\{0,1,\ldots, k\}$ ($w_i=0$ if the left-value $i$
is in the first block of the Callan permutation) is $(k+1)^n$. We
have to reduce this number with the number of not valid words
respect to $\pi^r$, with the words that do not contain at least one
of the $d_i\in D$. Let $A_{s}$ be the set of words that do not contain
the value $s$. $|\overline{\cup_{s\in D} A_s}|$ is to be determined. Clearly,
$|A_s|=(k+1-1)^n$ and this number does not depend on the choice of
$s$; hence, we have $\sum_{s\in D}|A_s|=k^n(j-1)$. 
The $|A_s\cap A_t|=(k+1-2)^n$ and 
$\sum_{s,t\in D}|A_s\cap A_t|=(k-1)^n\binom{j-1}{2}$. Analogously,
$|\cap_{l=1}^{m}{A_{s_l}}|=(k+1-m)^n\binom{j-1}{m}$. The
inclusion-exclusion principle gives
\[
w(j-1)=\sum_{m=0}^{j-1}(-1)^m\binom{j-1}{m}(k+1-m)^n,
\]
and this implies \eqref{pBeu1}.  
\end{proof}

\begin{proof}[Proof of Theorem 7. and Theorem 8.]
The claims follow by the symmetry of Eulerian numbers. 
If we reverse a permutation of $[k]$ with $j-1$ descents we obtain a 
permutation with $k-(j-1)-1$ descents. 
According to our previous arguments a pair $(\pi^r, w^{k-j})$, 
where $\pi^r$ is a permutation with $k-j$ descents and $w^{k-j}$ is a
 valid word respect to $\pi^r$ determines a Callan permutation. Hence, 
\[
|\mathcal{C}_n^k|=\sum_{j=1}^{k}\euler{k}{k-j+1}w(k-j)=\sum_{j=1}^k\euler{k}{j}w(k-j).
\]
We have two formulas for $w(k-j)$ using the results of the proofs of the previous theorems.
\begin{align*}
w(k-j)&=\sum_{m=0}^{j+1}\binom{j+1}{m}(m+k-j)!\stirling{n}{m+k-j},\\
w(k-j)&=\sum_{m=0}^{k-j}(-1)^m\binom{k-j}{m}(k+1-m)^n.
\end{align*}
This implies \eqref{pBeu4} and \eqref{pBeu2}. 
\end{proof}

\bigskip\bigskip

2010 \textsl{Mathematics Subject Classification:} 05A05, 05A15, 
                                                  05A19, 11B83.\\
\textsl{Keywords:} Combinatorial identities, Eulerian number, 
                   poly-Bernoulli number.\\
(Concerned with sequences: \seqnum{A008282}, 
\seqnum{A027641}/\seqnum{A027642}, \seqnum{A008277}, 
\seqnum{A099594})


\begin{thebibliography}{99} 

\bibitem{AIK}
T.~Arakawa, T.~Ibukiyama, M.~Kaneko,
{\it Bernoulli numbers and zeta functions},
With an appendix by Don Zagier,
Springer Monographs in Mathematics, Springer, Tokyo, 2014.

\bibitem{Bayad} A.~Bayad, Y.~Hamahata, 
Polylogarithms and poly-Bernoulli polynomials, 
\textsl{Kyushu J. Math.} \textbf{65} (2011), 15-24.

\bibitem{BB} B.~B\'enyi, 
{\it Advances in Bijective Combinatorics}, 
PhD thesis, (2014), available at
http://www.math.u-szeged.hu/phd/dreposit/phdtheses/ benyi-beata-d.pdf.


\bibitem{BH1} 
B.~B\'enyi, P.~Hajnal,
Combinatorics of poly-Bernoulli numbers,
{\it Studia Sci.~Math.~Hungarica}
{\bf 52(4) }(2015), 537--558.

\bibitem{BH2} 
B.~B\'{e}nyi and P.~Hajnal, 
Combinatorial properties of poly-Bernoulli relatives, 
\textsl{Integers} \textbf{17} (2017), A31.

\bibitem{BN} B.~B\'{e}nyi and G.V.~Nagy,
Bijective enumerations of $\Gamma$-free $0$-$1$ matrices, \textsl{Adv.~Appl.~Math.~} \textbf{96} (2018), 195--215.

\bibitem{Bona} 
M.~Bóna, 
\textsl{Combinatorics of Permutations},  
Discrete Mathematics and its Applications, 
Chapman Hall/CRC, Boca Raton, 2004.

\bibitem{Brewbaker}
C.~R.~Brewbaker, 
A combinatorial interpretation of the poly-Bernoulli numbers 
and two Fermat analogues, 
{\it Integers} 
{\bf 8} (2008), A02.

\bibitem{Callan}
Callan, Third comment to \seqnum{A099594}, in \cite{OEIS}.
 
\bibitem{GKP}
L.~Graham, D.~Knuth, O.~Patashnik,
{\it Concrete mathematics}, Addison-Wesley, 1994.

\bibitem{Kaneko}
M.~Kaneko, 
Poly-Bernoulli numbers, 
{\it J.~Th\'eor.~Nombres Bordeaux}
{\bf 9} (1997), 221--228

\bibitem{OEIS}
N.J.A.~Sloane, 
The on-line encyclopedia of integer sequences,
 \url{http://oeis.org}

\end{thebibliography}
\end{document}